\newcommand{\snug}{\unskip\kern-\mathsurround}
\newcommand{\ad}{{\rm ad}}
\newcommand{\gr}{{\rm gr}}
\newcommand{\cd}{{\rm cd}}
\newcommand{\Z}{{\mathbb Z}}
\newcommand{\F}{{\mathbb F}}
\newcommand{\Q}{{\mathbb Q}}
\newtheorem{theorem}{Theorem}
\newtheorem{corollary}[theorem]{Corollary}
\theoremstyle{definition}
\newtheorem{example}[theorem]{Example}
\begin{document}
\title{\bf The Genesis of a Theorem} \author{John Labute}
\address{Department of Mathematics and Statistics, McGill University, Burnside Hall, 805 Sherbrooke Street West, Montreal QC H3A 2K6, Canada}
\email{labute@math.mcgill.ca}
\thanks{This paper was written at the El Monte Eco Lodge in Mindo, Ecuador. The author wishes to thank Mariela, Tom and their staff and guides for providing a stimulating and restful environment}
\begin{abstract}
In this article we trace the genesis of a theorem that gives for the first time examples of Galois group $G_S$  of the maximal $p$-extension of $\Q$, unramified outside a finite set of primes not containing $p$, that are of cohomlogical dimension $2$. The pro-$p$-group $G_S$ is a fab pro-$p$-group which means that all its derived factors are finite.  
\end{abstract}
\date {June 23, 2024}
\maketitle
\hfill {\it \`{A} Serre\ \ \ \ \ \ }\section{The Theorem}\label{A}
Let $p$ be a odd prime and let $S=\{q_1,\ldots,q_m\}$ be a set of primes $q_i\equiv 1$ mod $p$. Let  $g_i$ be a primitive root mod $q_i$ and let the {\bf linking number} $\ell_{ij}\in\F_p$ be defined by
$$
q_i\equiv g_j^{-\ell_{ij}}\ {\rm(mod}\ q_j).
$$
If $g$ is another primitive root mod $q_j$ then $\ell_{ij}$ is replaced by $c_j\ell_{ij}$ for some $c_j$.
Let $G_S$ be the Galois group of the maximal $p$-extension of $\Q$ which is unramified outside of the set $S$. Not much was known about these groups; all we knew was that, by Golod-Shafarevich, they were infinite if $m\ge4$.
\begin{theorem}
Suppose that $m$ is even and that 
\begin{enumerate}[\rm (a)]
\item $\ell_{ij}=0$ if $i,j$ are both odd,
\item $\ell_{12}\ell_{23}\cdots\ell_{m-1,m}\ell_{m1}-\ell_{21}\ell_{32}\cdots\ell_{m,m-1}\ell_{1m}\ne0$.
\end{enumerate}
Then $G_S$ is of cohomological dimension 2.
\end{theorem}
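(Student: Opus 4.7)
The plan is to show that $G_S$ is a \emph{mild} pro-$p$-group in the sense of Labute; this implies at once that $\cd(G_S)=2$. By the standard Koch-type result for tame $p$-extensions of $\Q$ with ramification confined to $S$, the group $G_S$ admits a minimal pro-$p$ presentation $1\to R\to F\to G_S\to 1$ where $F$ is free pro-$p$ on generators $x_1,\ldots,x_m$ attached to the primes $q_1,\ldots,q_m$, and $R$ is normally generated by $m$ relations $r_1,\ldots,r_m$. The initial form $\rho_i$ of $r_i$ in degree $2$ of the graded $\F_p$-Lie algebra $\gr(F)$ is quadratic and encoded by the linking numbers,
$$\rho_i \;=\; \sum_{j\neq i}\ell_{ij}\,[x_j,x_i].$$
The task therefore reduces to proving that the sequence $(\rho_1,\ldots,\rho_m)$ is \emph{strongly free} in the free $\F_p$-Lie algebra $L=L_{\F_p}(x_1,\ldots,x_m)$; Labute's theorem on mild pro-$p$-groups then finishes the argument.

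To establish strong freeness I would exploit the block structure imposed by (a) and (b). Hypothesis (a) says that whenever $i$ is odd, $\rho_i=[y_i,x_i]$ with $y_i=\sum_{j\text{ even}}\ell_{ij}\,x_j$ a linear combination of even-indexed generators only. Thus the odd relations form a particularly clean family: each is a single commutator of an odd generator with a combination of even ones. I would first show that the odd relations $\rho_1,\rho_3,\ldots,\rho_{m-1}$ form a strongly free sequence in $L$; this should follow from the distinctness of the odd generators appearing as one side of each bracket, together with a generic independence of the $y_i$. Then, by the transitivity of strong freeness, it suffices to prove that the images $\bar\rho_2,\bar\rho_4,\ldots,\bar\rho_m$ form a strongly free sequence in the quotient Lie algebra $L/\langle\rho_1,\rho_3,\ldots,\rho_{m-1}\rangle$.

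The r\^ole of hypothesis (b) is to supply the non-degeneracy required at this second stage. The expression
$$\ell_{12}\ell_{23}\cdots\ell_{m-1,m}\ell_{m,1}\;-\;\ell_{21}\ell_{32}\cdots\ell_{m,m-1}\ell_{1,m}$$
should turn out to be the determinant of a matrix encoding how the even relations interact with one another modulo the odd ones around the cycle $1\to 2\to\cdots\to m\to 1$, and its non-vanishing is precisely what prevents unwanted syzygies among the $\bar\rho_{2k}$. The main technical obstacle will be making this assertion precise. A natural route is Anick's criterion: strong freeness of $(\rho_1,\ldots,\rho_m)$ is equivalent to the graded associative quotient $\F_p\langle x_1,\ldots,x_m\rangle/(\rho_1,\ldots,\rho_m)$ having Poincar\'e series $1/(1-mt+mt^2)$, and one must verify that condition (b) forces exactly this identity in each graded degree. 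Identifying the correct matrix whose determinant is the expression in (b), and establishing its link to the Hilbert series computation, is the crux of the proof.
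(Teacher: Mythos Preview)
Your overall framework is right---Koch's presentation, quadratic initial forms $\rho_i=\sum_{j\ne i}\ell_{ij}[\xi_i,\xi_j]$, and the reduction of $\cd(G_S)=2$ to strong freeness of $(\rho_1,\ldots,\rho_m)$---but the parity decomposition is applied in the wrong place, and this is why the argument stalls.

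You split the \emph{relations} into odd and even and propose a two-stage transitivity argument. The paper instead splits the \emph{generators}: let $S=\{\xi_i : i\text{ odd}\}$ and let $\mathfrak s$ be the ideal of $L$ generated by the even $\xi_j$. Condition~(a) says exactly that every $\rho_k$ (for all $k$, odd and even alike) lies in $\mathfrak s$, since no term $[\xi_i,\xi_j]$ with both indices odd survives. By the Elimination Theorem, $\mathfrak s$ is free on the set $\{\ad(m)(\xi_j):j\text{ even},\ m\in M(S)\}$, and a degree-2 basis of $\mathfrak s/[\mathfrak s,\mathfrak s]$ is given by the brackets $[\xi_i,\xi_j]$ with $i$ odd, $j$ even. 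So strong freeness reduces to a single linear-algebra check: the $\rho_1,\ldots,\rho_m$, reduced modulo $[\mathfrak s,\mathfrak s]$, must be linearly independent. Writing them in the columns indexed by $(1,2),(2,3),\ldots,(m-1,m),(m,1)$ gives an $m\times m$ matrix whose determinant is precisely $\ell_{12}\ell_{23}\cdots\ell_{m1}-\ell_{21}\ell_{32}\cdots\ell_{1m}$; condition~(b) is that determinant. There is no quotient Lie algebra to analyse, no transitivity lemma, and no Hilbert-series computation.

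By contrast, your route requires (i) strong freeness of the odd $\rho_i$ alone, which you justify by ``generic independence of the $y_i$''---but no such independence is among the hypotheses; (ii) a transitivity principle for strong freeness that you do not state or prove; and (iii) strong freeness of the even $\bar\rho_{2k}$ in a quotient Lie algebra that is no longer free, where neither the Elimination Theorem nor Anick's combinatorial criterion applies directly. Step~(iii) is the real obstacle: you correctly intuit that (b) is a determinant, but in your setup you cannot name the matrix, whereas in the paper's setup it is immediate.
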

\begin{example}
For $S=\{7,19,61,163\}$ and $p=3$ the non-zero $\ell_{ij}$ are
$$
\ell_{12}=\ell_{21}=\ell_{14}=\ell_{23}=\ell{_{24}}=\ell_{34}=1, \  \ell_{43}=\ell_{41}=-1.
$$
\end{example}
Conditions (a) and (b) are satisfied, so $\cd(G_S)=2$ and this gives the first example of a fab pro-$p$-group whose cohomological dimension is $2$. 
The most general statement of our criterion which covers the case $m$ is odd is elegantly formulated by Alexander Schmidt:
\begin{theorem}\label{B}
Let $G$ be a finitely generated pro-$p$-group such that $H^1(G,\Z/p\Z)$ is the direct sum of non-trivial subspaces $U,V$ and such that the cup product
$$ H^1(G,\Z/p\Z)\otimes H^1(G,\Z/p\Z)\rightarrow H^2(G,\Z/p\Z)$$
\begin{enumerate}[\ \rm(a)]
\item is trivial on $U\otimes U$ and 
\item maps $U\otimes V$ surjectively onto $H^2(G,\Z/p\Z)$,
\end{enumerate}
then $G$ is of cohomological dimension $2$.
\end{theorem}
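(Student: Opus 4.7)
The plan is to produce a mild pro-$p$-presentation of $G$ in the sense of Labute and to conclude via his theorem that every mild pro-$p$-group has cohomological dimension $2$; since $G$ is pro-$p$, this is equivalent to showing $H^3(G,\Z/p\Z)=0$ for the trivial module.

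First I would choose a minimal system of topological generators of $G$ and partition it into two subsets $\{y_1,\dots,y_n\}\cup\{x_1,\dots,x_m\}$ whose Frattini classes form $\F_p$-bases dual to $U$ and $V$ respectively; let $\{\psi_i\}\subset U$ and $\{\phi_j\}\subset V$ denote the corresponding dual bases of $H^1(G,\Z/p\Z)$. Lifting to a free pro-$p$-group $F$ on $\{y_i\}\cup\{x_j\}$, I fix a minimal presentation $G=F/R$ with minimal relators $r_1,\dots,r_d$, where $d=\dim_{\F_p} H^2(G,\Z/p\Z)$. The classical Koch--Serre formula expresses the initial form $\rho_k$ of $r_k$ in the second layer of the Zassenhaus $p$-filtration as a linear combination of the brackets $[y_i,y_{i'}]$, $[y_i,x_j]$, $[x_j,x_{j'}]$ and the $p$-th power terms, whose coefficients are read off by pairing the dual of $r_k$ in $H^2(G,\Z/p\Z)$ with cup products of the $\psi_i$'s and $\phi_j$'s. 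Hypothesis (a) then forces the $[y_i,y_{i'}]$-coefficients (and their $p$-power companions for odd $p$) to vanish for every $k$, while hypothesis (b) forces the $d\times nm$ matrix of $[y_i,x_j]$-coefficients to have full row rank $d$.

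To conclude I would grade $\gr F$ bigradedly with $y_i$ in bidegree $(1,0)$ and $x_j$ in bidegree $(0,1)$; each initial form then decomposes as $\rho_k=\sum_i[y_i,\alpha^k_i(x)]+\gamma_k(x)$, a part linear in the $y$-variables plus a pure-$x$ piece. The main step, and the main obstacle, is to verify that $\rho_1,\dots,\rho_d$ is a strongly free (inert) sequence in the free restricted Lie algebra $\gr F$; once this is done, Labute's mildness theorem delivers $\cd(G)\le 2$ directly. Strong freeness amounts to the universal enveloping algebra of $\gr F/(\rho_k)$ having the Hilbert series predicted by an inert presentation, equivalently to the acyclicity of a certain Koszul-type complex; the bigraded structure is essential, for filtering this complex by $y$-degree reduces acyclicity to the linear-algebra statement that the $d\times nm$ matrix above has rank $d$, which is precisely what hypothesis (b) supplies. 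It is here that the \emph{surjectivity} of the cup product on $U\otimes V$, rather than mere non-vanishing, earns its keep.
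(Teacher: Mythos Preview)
Your overall strategy coincides with the paper's: dualize the cup-product hypotheses to obtain conditions on the initial forms $\rho_k$ of a minimal relator set (condition~(a) kills the $[y_i,y_{i'}]$-coefficients, condition~(b) gives full row rank of the $[y_i,x_j]$-coefficient matrix), and then deduce that $\rho_1,\dots,\rho_d$ is strongly free, whence $\cd(G)\le 2$ by Koch/Labute.

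The difference lies in how strong freeness is established. The paper uses the Elimination Theorem: with $S=\{y_i\}$, the ideal $\mathfrak{s}=(x_j)_j$ of the free $\F_p$-Lie algebra is itself free on $\{\ad(m)(x_j):m\in M(S)\}$; condition~(a) places each $\rho_k$ in $\mathfrak{s}$, and since the degree-two part of this free basis is exactly $\{[y_i,x_j]\}$, condition~(b) says the $\rho_k$ are linearly independent modulo $[\mathfrak{s},\mathfrak{s}]$ and hence extend to a homogeneous basis of $\mathfrak{s}$. The paper's Criterion for Strong Freeness then finishes in a few lines. Your proposed route through a bigraded Koszul-type complex filtered by $y$-degree is a reasonable alternative in spirit, but the essential step is asserted rather than argued: the claim that ``filtering reduces acyclicity to the rank-$d$ condition'' is not automatic. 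Passing to the associated graded replaces each $\rho_k$ by its $(1,1)$-part $\sum_{i,j} a_{ijk}[y_i,x_j]$, and you still need these leading terms to be strongly free --- mere linear independence does not suffice in general. Proving \emph{that} is precisely the content of the Elimination-Theorem step, so what you call a reduction is in fact the heart of the proof, and it is missing from your sketch.

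One small correction: hypothesis~(a) constrains only the commutator (cup-product) coefficients, not any $p$-th-power coefficients; your parenthetical ``and their $p$-power companions'' is not justified by~(a). In the Zassenhaus filtration for odd $p$ the $p$-th powers land in degree $p\ge 3$ anyway, so the issue is moot there, but the reasoning as written is incorrect.
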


\section{It all began with a question of Serre}
In a 1963 S\'eminaire Bourbaki Lecture ~\cite{Serre63} Serre proposed the following question where $F$ is a finitely generated free pro-$p$-group and $F_2=F^p[F,F]$.
:

 "Soit $r\in F_2$, et soit $G_r=F/(r)$. Peut-on \'etendre \`a $G_r$ les r\'esultats d\'emontr\'es par Lyndon~\cite{Lyndon} dans le cas discret? En particulier, si $r$ n'est pas une puissance $p$-i\`eme, est-il vrai que $G_r$ est de dimension cohomologique $2$?"
 \par
 In the Fall of 1964 Serre introduced me to Lie algebras and the work of Lazard on  filtrations of groups in his course "Groups and Lie Algebras" which he gave at Harvard. He also gave me a private lecture on the Elimination Theorem for groups and Lie algebras which as we shall see played a decisive role in the proof of Theorem 1. My work is very much influenced by the interplay between groups and Lie algebras. Sometimes, if one can prove the corresponding Lie algebra question and the result is strong enough, it can be pulled back to a proof of the original group theory question. This is the case here, at least partially so; so let me give a sketch of my attempt to solve his question with this in mind. I was so excited with my "proof" that I told Serre I had answered his question in the affirmative. It was only when I wrote down all the details of my attempted proof that I realized that there was a natural boundary inherent in my method. I was embarassed having to tell Serre this especially when he said he told Tate that I had solved his question.
\par
 
 Let $R= (r)$ and let $M=R/[R,R]$ where $[R,R]$ is the subgroup of $R$ generated by the commutators $[x,y]=x^{-1}y^{-1}xy$ with 
 $x,y \in R$. Then by a result of Brumer~\cite{Bru66} we have $\cd(G_r)=2$ if and only if $M$, viewed as a module over the completed algebra $\Z_p[[G]]$ of $G$, is a free module of rank $1$  generated by the image of $r$ in $M$.  In ~\cite{Lab67} we showed that this was true if $r$ was not "too close" to a $p$-th power. 
 More precisely, let $(F_n)$ be the filtration of $F$ defined by $F_1=F$ and $F_{n+1}=F^p[F,F_n]$, also known as the descending $p$-central series of $F$, and let $e$ be largest with $r\in F_e$; we have $e<\infty$ if $r\ne1$, since the intersection of the subgroups $F_n$ is $1$. We show that $\cd(G_r)=2$ if $r$ is not a $p$-th power mod $F_{e+1}$.
   \par
   
  The filtration $(F_n)$ has two important properties: 
  \begin{enumerate}[\ \rm (a)]
 \item $[F_n,F_m]\subset F_{n+m}$, 
 \item $F_n^p\subset F_{n+1}$.
 \end{enumerate}
 
  Let $\gr_n(G)$ be the abelian group $G_n/G_{n+1}$, which we denote additively. We introduce a Lie bracket on 
  $\gr(G)=\oplus\gr_n(G)$ using the commutator operation as follows: let $\xi_n \in \gr_n(G), \eta_m\in\gr_m(G)$ be the the images of $x_n\in
 G_n, y_m\in G_m$ respectiively. Then, letting $[\xi_n,\eta_m]$ be the image in $\gr_{n+m}$ of $[x_n, y_m]\in G_{n+m}$, we get a Lie bracket on the graded $\F_p$-module $\gr(G)$. The $p$-th power operator on $G$ induces the structure of a graded 
 $\F_p[\pi]$-algebra on $\gr(G)$; namely, if $x\in F_n$ and $\xi$ is its image in $\gr_n(G)$ then $\pi\xi$ is the image of $x^p$ in $\gr_{n+1}(G)$. If $F=F(x_1,\ldots,x_d)$ is the free group on $x_1,\ldots,x_d$ and $\xi_i$ is the image of $x_i$ in $\gr_1(F)$ then $L=\gr(F)$ is the free Lie algebra over $\F_p[\pi]$ on $\xi_1,\ldots,\xi_d$.
 
 Let $\rho$ be the image of $r$ in $\gr_e(F)$; this element is called the {\bf initial form} of $r$.  Then $\rho$  not a multiple of $\pi$ is the same as saying $r$ is not a $p$-th power modulo $F_{e+1}$.   Let 
 $\mathfrak r$ be the ideal of $L$ generated by $\rho$ and let $\mathfrak m=\mathfrak r/[\mathfrak r,\mathfrak r]$. Let  $\mathfrak g= L/\mathfrak r$ and let $U_\mathfrak g$ be the enveloping algebra of $\mathfrak g$. Then $\mathfrak m$ is a $U_\mathfrak g$ module via the adjoint representation. Then the corresponding Lie algebra question would be to show that $\mathfrak r/[\mathfrak r,\mathfrak r]$ was a free $U_\mathfrak g$ module of rank $1$ generated by the image of $\rho$. To prove this we needed to prove that $\mathfrak g$ was torsion free as an $\F_p[\pi]$ module which would entail the same is true for $U_\mathfrak g$ and hence by the Birkhoff-Witt Theorem that $U_\mathfrak g$ has no zero divisors. It would be then a straightforward exercise using the exact sequence
 $$
 \mathfrak r/[\mathfrak r,\mathfrak r]\rightarrow I_\mathfrak g\rightarrow U_g\rightarrow \F_p[\pi]\rightarrow 0
 $$
 where $I_\mathfrak g\cong U_{\mathfrak g}^d$ is the augmentation ideal of $U_\mathfrak g$. This yields the resolution by free $U_\mathfrak g$ modules
 $$
0\rightarrow \mathfrak r/[\mathfrak r,\mathfrak r]\rightarrow I_\mathfrak g \rightarrow U_g\rightarrow \F_p[\pi]\rightarrow 0
 $$
 showing that $\cd(\mathfrak g)=2$.
 \par
 To show that this lifts to $G=F/R$ we have to show that $\gr (G)=L/(\rho)$ or, equivalently that $\gr(R)=(\rho)$ where $R_n=F\cap F_n$.
 The proof of this requires that $\mathfrak g$ be torsion free and utilizes suitable Lazard filtrations of $F$ and iterative applications of Birkhoff-Witt; it is much too lengthy to be even sketched here. The same goes for the proof of $L/(\rho)$ being torsion free if $\rho$ is not a multiple of $\pi$. Today I marvel that I could come up with such intricate proofs but this is largely due to the influence of Serre at Harvard in 1964 and in Paris in 1965/67.
\par

This proof generalizes to the case of several relators; cf.~\cite{Lab85} but the linear independence of the initial forms of the relators over $\F_p[\pi]$ is not enough to prove the result. We need to assume that $U_\mathfrak g$ is a torsion free $\F_p[\pi]$ module and that $\mathfrak r/[\mathfrak r,\mathfrak r]$ is a free $U_{\mathfrak g}$ module on the images of the initial forms of the relators. We call such relators {\bf strongly free}.

 As it turned out, the question of Serre had a negative answer; for example, in the case  $r=x^p[x^p,y]$ as was shown by Guildenhuys~\cite{Gild68} since $x^p=1$ in $G_r$ and $r$ is not a $p$-th power in $F$. We still do not have a criterion for deciding whether the group $G_r$ has finite or infinite cohomological dimension. 
 
 \section{A fortitutitous detour to discrete groups}
 In this section $F$ will be a free discrete group of rank $m$. We let $(F_n)$ be the filtration of $F$ defined by
 $F_1=F, F_{n+1}=[F,F_n]$, also known as the descending central series of $F$. The graded Lie algebra $\gr(F)$ is defined as above. It is a free Lie algebra over $\Z$. Let $r\in F$ and let $G=F/(r)$. Suppose $r\in F_e$, $r\notin F_{e+1}$ with $e\ge 2$. Let $\rho$ be the image of $r$ in $\gr_e(F)$, the initial form of $r$. If $\rho$ is not a proper multiple then Waldinger in~\cite{Wald67} showed that 
 $\gr_n(G)$ is a free $\Z$ module for $e\le n\le 3e$ and gave formulae for the ranks as a partial answer to a question of Magnus who asked if $\gr(G)$ was torsion free for $G=F/(r_1,\ldots,\_d)$ if the initial forms of the $r_i$ were linearly independent.
 
 In \cite{Lab70} it is shown that in fact $\gr(G)=\gr(F)/(\rho)$ is a free $\Z$ module if $\rho$ is not a proper multiple and that the Poincar\'e series of its enveloping algebra was 
 $$
 \frac{1}{1-mt+t^e}=\prod_{n\ge1}\frac{1}{(1-t^n)^{g_n}}
 $$
 where $g_n$ is the rank of $\gr_n(G)$. A straightforward calculation yields the formula
 $$
 ng_n = \sum_{d|n}\mu( n/d)[\sum_{0\le i\le n/d}(-1)^i \frac{1}{d+i-ei}\binom{d+i-ei}{i} m^{d-ei}  ] .
 $$
In particular $g_n$ depends only on $n, e$ and $m$.

In \cite{Lab85} we extended these results to the case of several relators provided that their initial forms $\rho_1,\ldots,\rho_d$
 satisfied the following two conditions
\begin{enumerate}[\rm (a)]
\item The enveloping algebra $U$ of $\gr(F)/(\rho_1,\ldots,\rho_d)$ is a torsion free $\Z$ module;
\item If $\mathfrak r=(\rho_1,\ldots,\rho_d)$ then $\mathfrak r/[\mathfrak r,\mathfrak r]$ is a free $U$ module with basis the the images of $\rho_1,\ldots,\rho_d$.
\end{enumerate}
Such a sequence we also call strongly free. 
We also gave a method for constructing such sequences using  using the Elimination Theorem. This method would prove decisive in the proof of Theorem 1.

\begin{theorem}[Elimination Theorem]
Let $K$ be a commutative ring and let $L=L(X)$ be the free Lie algebra over $K$ on the set $X$. Let $S$ be a subset of $X$ and let $\mathfrak s$ be the ideal of $L(X)$ generated by $X-S$. Let $W$ be the enveloping algebra of $L(S)$ and let $M(S)$ be the submonoid of $W$ generated by $S$.Then $\mathfrak s$ is the free Lie algebra over $k$ on the elements
$$
T=\{\ad(m)(x)\ | \ x\in X-S,\  m\in M(S)\}.
$$
\end{theorem}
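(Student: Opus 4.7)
The plan is to establish the theorem in two stages: first that $T$ generates $\mathfrak{s}$ as a Lie algebra over $K$, and second that the resulting surjection $L(T) \twoheadrightarrow \mathfrak{s}$ from the free Lie algebra on $T$ (viewed as an abstract set) is injective. Running through both steps is the $K$-module decomposition $L = L(S) \oplus \mathfrak{s}$ coming from the projection $L \to L(S)$ that kills $X-S$ and splits via the inclusion $L(S)\hookrightarrow L$.

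For generation, let $\mathfrak{t}\subset L$ be the Lie subalgebra generated by $T$. Clearly $\mathfrak{t}\subset \mathfrak{s}$, and $X-S\subset T\subset \mathfrak{t}$. I would verify that $\mathfrak{t}$ is stable under $\ad(s)$ for every $s\in S$; combined with $X-S\subset\mathfrak{t}$ (so $\ad(x)$ preserves $\mathfrak{t}$ automatically for $x\in X-S$), this makes $\mathfrak{t}$ an ideal of $L$ containing $X-S$, forcing $\mathfrak{s}\subset\mathfrak{t}$. The key identity is
$$
[s,\ad(m)(x)] \;=\; \ad(sm)(x)\in T
$$
for $s\in S$, $m\in M(S)$, $x\in X-S$ (using $sm\in M(S)$); the fact that $\ad(s)$ is a derivation of $L$ propagates this stability to all iterated brackets of elements of $T$.

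For freeness, I would pass to enveloping algebras. Since $L$ and $L(S)$ are free $K$-modules, the Poincar\'e--Birkhoff--Witt theorem applied to the decomposition $L = L(S)\oplus\mathfrak{s}$ yields a $K$-module isomorphism $U_{L} \cong U_{L(S)} \otimes_K U_{\mathfrak{s}}$ via $u\otimes v\mapsto uv$. Identifying $U_L = K\langle X\rangle$ and $U_{L(S)} = W = K\langle S\rangle$, every monomial in $X$ admits a unique left-to-right factorization as an element of $M(S)$ followed by a word in the blocks $x\,m$ with $x\in X-S$, $m\in M(S)$; this exhibits $K\langle X\rangle$ as $W\otimes_K A$, where $A\subset K\langle X\rangle$ is the associative subalgebra generated by those blocks. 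Using the formula $\ad(m)(x) = mx + (\text{terms with some letters of } m \text{ to the right of } x)$, the natural associative map $K\langle T\rangle\to A$ sending the abstract free generator indexed by $(m,x)$ to $\ad(m)(x)$ is upper triangular with identity diagonal with respect to the block-length filtration, hence a $K$-module isomorphism. Matching the two tensor decompositions of $K\langle X\rangle$ then identifies $U_{\mathfrak{s}}$ with the free associative algebra $K\langle T\rangle = U_{L(T)}$, which forces the Lie map $L(T)\to\mathfrak{s}$ to be an isomorphism.

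The main obstacle is the combinatorial bookkeeping in the freeness step: aligning the PBW basis from the Lie decomposition with the monoid decomposition of $K\langle X\rangle$, and then checking that the change-of-basis $\ad(m)(x)\mapsto mx+\text{lower}$ is triangular with invertible diagonal over a general commutative ring $K$. This demands a careful filtration-and-leading-term argument rather than a clean dimension count, but once the bookkeeping is set up the verification is routine.
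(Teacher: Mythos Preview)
The paper does not actually supply a proof of the Elimination Theorem: it is quoted as a classical result (the author recalls Serre giving him a private lecture on it in 1964) and is then used, together with its corollary, to prove the subsequent Criterion for Strong Freeness. So there is no in-paper argument to compare your proposal against.

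That said, your freeness step has a real slip. You assert that the algebra map $K\langle T\rangle\to A$ sends the generator indexed by $(m,x)$ to $\ad(m)(x)$ and is ``upper triangular with identity diagonal''. But $\ad(m)(x)$ does not lie in your $A=K\langle Y\rangle$: writing $m=s_1\cdots s_k$ one has
\[
\ad(m)(x)=\sum_{I\subseteq\{1,\dots,k\}}(-1)^{k-|I|}\,s_I\,x\,\overleftarrow{s_{I^c}},
\]
whose term $mx$ begins with a letter of $S$ and hence lies in $W_+K\langle X\rangle$, not in $A$. If instead you project to $A$ along $W_+K\langle X\rangle$, the image of $\ad(m)(x)$ is $(-1)^k x\,\tilde m$ (with $\tilde m$ the reversal of $m$), so the ``diagonal'' is a signed bijection $T\to Y$ via reversal, not the identity. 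Moreover that projection is only a left $A$-module map, not a ring homomorphism, so triangularity on generators does not propagate to products for free; you still owe an argument that the surjection $K\langle T\rangle\twoheadrightarrow U_{\mathfrak s}$ is injective.

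The bookkeeping can be salvaged (grade $K\langle X\rangle$ by total length and by the number of letters from $X\setminus S$; in each finite-rank bidegree a surjection of free $K$-modules of equal rank is an isomorphism), but the standard route avoids it entirely: form the semidirect product $L(S)\ltimes L(T')$ with $T'$ an abstract copy of $M(S)\times(X\setminus S)$, let $s\in S$ act by the derivation sending $(m,x)$ to $(sm,x)$, and verify via the universal property of $L(X)$ that the Lie maps determined by $s\mapsto s$, $x\mapsto(1,x)$ and $s\mapsto s$, $(m,x)\mapsto\ad(m)(x)$ are mutually inverse. This yields the theorem over any commutative $K$ without invoking PBW or any filtration argument.
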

\begin{corollary}
The $W$ module $\mathfrak s/[\mathfrak s,\mathfrak s]$  is a free module over $W$ with basis the image of $T$.
\end{corollary}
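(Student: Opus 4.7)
The plan is to deduce the corollary directly from the Elimination Theorem, using only the additional fact that the enveloping algebra of a free Lie algebra is itself free as an associative algebra. Set $\mathfrak{m} := \mathfrak{s}/[\mathfrak{s}, \mathfrak{s}]$. Since $L(X)$ decomposes as a semidirect product $L(S) \ltimes \mathfrak{s}$, the subalgebra $L(S)$ acts on $\mathfrak{s}$ by adjoint action. This action, being by derivations, preserves $[\mathfrak{s}, \mathfrak{s}]$ and so descends to $\mathfrak{m}$. Because $\mathfrak{m}$ is abelian, the $L(S)$-representation extends uniquely to a $W$-module structure on $\mathfrak{m}$, under which $\overline{\ad(m)(x)} = m \cdot \bar{x}$ for every $m \in M(S)$ and $x \in X - S$, where $\bar{x}$ denotes the image of $x$ in $\mathfrak{m}$.

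By the Elimination Theorem, $\mathfrak{s}$ is free as a Lie algebra on $T$ over $K$, so $\mathfrak{m}$ is free as a $K$-module on the image of $T$, which we have just identified with the set $\{m \cdot \bar{x} : m \in M(S),\, x \in X - S\}$. Since $L(S)$ is itself a free Lie algebra on $S$, the Poincar\'e--Birkhoff--Witt theorem identifies $W$ with the tensor algebra on $S$, so $M(S)$ is a $K$-basis of $W$. Hence the disjoint union $\bigsqcup_{x \in X - S} M(S) \cdot \bar{x}$ is a $K$-basis of $\mathfrak{m}$, which is exactly the statement that $\mathfrak{m} \cong \bigoplus_{x \in X - S} W \cdot \bar{x}$ is a free $W$-module, with the image of $T$ supplying this basis via the $W$-action.

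Once the Elimination Theorem is granted, essentially no obstacle remains: the argument is a direct repackaging of a free $K$-module description into a free $W$-module one, and the only technical input beyond the Elimination Theorem is the freeness of $W$ as an associative algebra on $S$. The one step that requires bookkeeping care is verifying $\overline{\ad(m)(x)} = m \cdot \bar{x}$ in $\mathfrak{m}$; this is immediate from the definitions of the $W$-action and $M(S)$, but it is the bridge that turns the $K$-basis $T$ of the free Lie algebra $\mathfrak{s}$ into a $W$-free generating set indexed by $X - S$.
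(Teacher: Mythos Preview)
The paper states this corollary without proof, treating it as an immediate consequence of the Elimination Theorem; your argument correctly supplies the omitted details and is exactly the intended derivation. Your clarification that the actual $W$-basis is $\{\bar x : x\in X\setminus S\}$, with the image of $T$ being the associated $K$-basis obtained via the $W$-action, resolves the slight ambiguity in the paper's phrasing and matches how the corollary is invoked in the proof of the next theorem.
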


\begin{theorem}[Criterion for Strong Freeness]
Let $\rho_1,\ldots,\rho_d$ be elements of $\mathfrak s$ such that the elements
$$
T_1=\{\ad(m)(\rho_j)| m\in M(S),\  0\le j\le d\}
$$
are part of a basis of $\mathfrak s$. Then $\rho_1,\ldots \rho_n$ is a strongly free sequence.
\end{theorem}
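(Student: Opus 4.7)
The plan is to apply the Elimination Theorem a second time, now to $\mathfrak s$ itself. By hypothesis, the set $T_1 = \{\ad(m)(\rho_j)\mid m\in M(S),\ 1\le j\le d\}$ is part of a basis of the free Lie algebra $\mathfrak s$, so I complete it to a free generating set $T_1\cup T_2$ and regard $\mathfrak s$ as $L(T_1\cup T_2)$. Applying the Elimination Theorem with distinguished subset $T_2$ then exhibits the ideal $\mathfrak a$ of $\mathfrak s$ generated by $T_1$ as the free Lie algebra on $\{\ad(\mu)(t)\mid t\in T_1,\ \mu\in M(T_2)\}$, and, by the Corollary, shows that $\mathfrak a/[\mathfrak a,\mathfrak a]$ is free as a $U(L(T_2))$-module on the image of $T_1$.

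The first verification is that $\mathfrak a$ coincides with the ideal $\mathfrak r=(\rho_1,\ldots,\rho_d)$ of the ambient algebra $L$, not merely of $\mathfrak s$. The inclusion $\mathfrak a\subseteq \mathfrak r$ is automatic. For the reverse, it suffices to check that $\mathfrak a$ is stable under $\ad(s)$ for each $s\in S$; this follows by induction on bracket depth via the Jacobi identity, using the key fact that on generators $\ad(s)(\ad(m)(\rho_j))=\ad(sm)(\rho_j)\in T_1$.

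With $\mathfrak a=\mathfrak r$, the Elimination Theorem also yields $\mathfrak s/\mathfrak r\cong L(T_2)$, and combining with $L/\mathfrak s\cong L(S)$ produces a short exact sequence of Lie algebras
$$
0\to L(T_2)\to \mathfrak g\to L(S)\to 0,
$$
where $\mathfrak g=L/\mathfrak r$. Since free Lie algebras are free $K$-modules and the quotient $L(S)$ is $K$-free, the sequence $K$-splits, so $\mathfrak g$ is $K$-free. Birkhoff--Witt then gives that $U=U_{\mathfrak g}$ is also $K$-free, which is condition (a) of strong freeness.

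For condition (b), I need to upgrade the $U(L(T_2))$-freeness of $\mathfrak r/[\mathfrak r,\mathfrak r]$ on $T_1$ to $U$-freeness on $\{\rho_1,\ldots,\rho_d\}$. Applying Birkhoff--Witt to the above extension decomposes $U$ as a free left $U(L(T_2))$-module whose basis consists of lifts $\bar m$ of the monomials $m\in M(S)$. Since the adjoint action of $\bar m$ on $\mathfrak r/[\mathfrak r,\mathfrak r]$ is exactly $\ad(m)$, the identity $\bar m\cdot\rho_j=\ad(m)(\rho_j)$ regroups the $U(L(T_2))$-basis $T_1$ into a $U$-basis $\{\rho_1,\ldots,\rho_d\}$. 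I expect the main subtlety to be this last step: the section $L(S)\hookrightarrow \mathfrak g$ is only $K$-linear and not a Lie algebra map, so one must verify that the resulting PBW decomposition of $U$ as a left $U(L(T_2))$-module is genuinely compatible with the adjoint action on $\mathfrak r/[\mathfrak r,\mathfrak r]$, at which point the two basis descriptions match term by term and (b) follows.
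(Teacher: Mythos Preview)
Your argument is essentially the paper's own proof: apply the Corollary to the Elimination Theorem inside $\mathfrak s$ (viewed as free on a basis containing $T_1$) to get $\mathfrak r/[\mathfrak r,\mathfrak r]$ free over $V=U(\mathfrak s/\mathfrak r)$ on the image of $T_1$, then use the split exact sequence $0\to\mathfrak s/\mathfrak r\to\mathfrak g\to L(S)\to0$ to write $U=\bigoplus_{m\in M(S)}Vm$ and regroup the $V$-basis $T_1$ into the $U$-basis $\{\overline\rho_j\}$. The ``subtlety'' you flag at the end is not actually present: the section $L(S)\to\mathfrak g$ is the composite $L(S)\hookrightarrow L\twoheadrightarrow L/\mathfrak r$, which is a genuine Lie algebra homomorphism (since $\mathfrak r\subseteq\mathfrak s$ and $L(S)\cap\mathfrak s=0$), so the sequence splits as Lie algebras and the compatibility of the PBW decomposition with the adjoint action is automatic.
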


\begin{proof}
The ideal $\mathfrak r$ of $L$ generated by $\rho_1,\ldots,\rho_d$ is generated as an ideal of $\mathfrak s$ by the set $T_1$.
Since $T_1$ is part of a basis of $\mathfrak s$ the Corollary to the Elimination Theorem says that $\mathfrak r/[\mathfrak r,\mathfrak r]$ is a free module over the enveloping algebra $V$ of $\mathfrak s/\mathfrak r$ with basis the image of $T_1$.
The exact sequence
$$
0\rightarrow \mathfrak s/\mathfrak r \rightarrow U/\mathfrak r\rightarrow L(S)\rightarrow 0
$$
splits which implies that 
$$
U=V\otimes W=\bigoplus_{m\in M(S)}Vm.
$$
This implies that $U$ is a free $K$ module. To show that $\mathfrak r/[\mathfrak r,\mathfrak r]$ is  a free modulue let $\overline\rho_i$ be the image of $\rho_i$ in $\mathfrak r/[\mathfrak r,\mathfrak r]$ and suppose that
$$
\sum_{i} u_i\rho_i=0 \ with\  u_i\in U.
$$
Then $u_i=\sum_{j}v_{ij}m_j$ with $u_i\in U$, $v_{ij}\in V$ which implies that
$$
\sum_i u_i\overline\rho_i=\sum_{i,j}v_{ij}\ad(m_j)(\overline\rho_j)=0.
$$
But this implies $v_{ij}=0$ since $\mathfrak r/[\mathfrak r,\mathfrak r]$ is a free $V$ module with basis $T_1$.

\end{proof}
The Lie algebras $L/\mathfrak r$ constructed in this way are of cohomological dimension $\le 2$.

An important example of this is
$$
[\xi_1,\xi_2], [\xi_2,\xi_3], [\xi_3,\xi_4], \dots,[\xi_{m-1},\xi_m],
$$
 where $X=\{x_1,\ldots,x_m\}$ and 
$S=\{x_1,x_3,x_5,\dots\}$ so $X-T=\{x_2,x_4, x_6,\ldots\}$.
\medskip
\par
In \cite{Anick87}, Anick gave the name {\bf mild group}  to a discrete group with defining relators whose initial forms are strongly free and gave many examples using his criterion of {\bf combinatorial freeness} to prove mildness. The most important of these is the fundamental group $G$ of the complement of a {\bf pure braid link} $\mathcal L$ in $S^3$ which is obtained from a pure braid by identifying the top and bottom of each strand. If $m$ is the number of strands of the braid then $G$ has the presentation $G=F/(r_1,\ldots,r_{m-1})$, where $r_i=[x_i,y_i]$ with
$$
y_i=x_i^{a_i}\prod_{j=1}^{m}x_j^{a_{ij}}\ {\rm mod}\ [F,F].
$$
so that 
$$
r_i=\prod_{j=1}^{n}[x_i,x_j]^{a_{ij}} \ {\rm mod}\  [F,[F,F]].
$$
The matrix $(a_{ij})$ is a symmetric matrix with zero diagonal because $a_{ij}$ is the {\bf linking number} between the $i$-th and $j$-th unknot of the link. We assume that the matrix $a_{ij}$ has no zero rows so that the image $\rho_i$ in $\gr_2(F)$ is non-zero. We then have
$$
\rho_i=\sum_{j=1}^{m}a_{ij}[\xi_i,\xi_j],
$$
where $\xi_i$ is the image of $x_i$ in $\gr_2(F)$. Anick uses a weighted graph associated to the matrix $(a_{ij})$ to give a criterion for the  strong freeness of the sequence $\rho_1,\ldots,\rho_{m-1}$; note that $\sum\rho_i=0$. This graph, which Anick calls a {\rm linking diagram},  has as vertices the set $\{\xi_1,\ldots,\xi_m\}$ with $\xi_i,\xi_j$ being joined if $a_{ij}\ne0$; in this case, the weight being $a_{ij}$. The graph is connected mod $p$ if and only if there is a spanning subtree whose vertices are not congruent to zero modulo $p$ where $p$ can be any prime.
\par
Anick then shows that if the linking diagram of the matrix $(a_{ij})$, or of the link $\mathcal L$, is connected mod $p$ then the sequence 
$\rho_1,\ldots,\rho_i$ is strongly free mod $p$ and strongly free if it is connected mod $p$ for any prime $p$.

\section{Back to pro-$p$-groups}
In ~\cite{Koch77} Koch uses Lazard filtrations of the completed group algebra $\F_p[[F]]$ to show that if $G=F(x_1,\ldots,x_m)/(r_1,\ldots,r_d)$ we have $\cd(G)=2$ if the initial forms of the relators form a strongly free sequence of Lie elements in the free Lie subalgebra $\mathfrak L$ of $\gr(F)$ on $\xi_,\ldots,\xi_m$, the initial forms of $x_1,\ldots,x_m$. The $\F_p$-algebra $\mathfrak A=\gr(F)$ is the free associative algebra over $\F_p$ on the elements $\xi_1,\ldots,\xi_m$. For Koch, strong freeness means that, if $\mathfrak R$ is the ideal of $\mathfrak A$ generated by $\rho_1,\ldots,\rho_d$ and $\mathfrak I$ is the augmentation ideal of $\mathfrak A$, then $\mathfrak A/\mathfrak I\mathfrak R$ is a free $\mathfrak A/\mathfrak R$ module on the images of the $\rho_i$. But $\mathfrak B=\mathfrak A/\mathfrak R$ is the enveloping algebra of $\mathfrak g=\mathfrak L/\mathfrak r$, where $\mathfrak r$ is the ideal of $\mathfrak L$ generated by the $\rho_i$ and $\mathfrak R/\mathfrak I\mathfrak R$ is isomorphic to $\mathfrak r/[\mathfrak r,\mathfrak r]$, the isomorphism being induced by the inclusion $\mathfrak r\subset\mathfrak R$. So his definition was the same as ours but this connection was not made in~\cite{Koch77}.
\par
Koch also gives a criterion for strong freeness. To describe it, let
$$
\rho_i=\sum_{j=1}^{d} a_{ij}\xi_j
$$

\noindent where $a_{ij}=\partial_j\rho_i\in\mathfrak A$ (Fox derivative). The free commutative, associative $\F_p$-algebra $\tilde{\mathfrak A}$ on $\xi_1,\ldots,\xi_m$ is naturally a quotient of $\mathfrak A$; we identify $\xi$ with its image in $\tilde{\mathfrak A}$. {\bf Koch's Criterion} is that the rank of the ${d\times m}$ matrix $M_K=(\tilde a_{ij})$ be equal to $d$.
If Koch's Criterion holds, one has $d<m$ because of the relations
$$\sum_{j=1}^{j=d} \partial_j\rho_i\xi_j=0,\ 1\le i\le m$$
which shows that the columns of $M_K$ are linearly dependent.

For the relators $\rho_1=[\xi_1,\xi_2], \rho_2=[\xi_2,\xi_3],\rho_3=[\xi_{3},\xi_4]$ where $m=4$ we have
$$
M_K=\begin{bmatrix}-\xi_2&\xi_1&0&0\\
				0&-\xi_3&\xi_2&0\\
				0&0&-\xi_4&\xi_3\end{bmatrix}
$$
which is of rank $3$ and so the relators are strongly free.
\par 
In~\cite{Koch70}, using local and global classfield theory,  Koch gives a presentation for the Galois group $G$ of the maximal $p$-extension of $\Q$ which is unramified outside a finite set $S$ of primes which is analogous to that of the fundamental group of the complement of a  tame link in $S_3$. 
If $S=\{q_1,\ldots,q_m\}$ with $q_i\equiv 1$ mod $p$, he shows that $G=F(x_1,\ldots,x_m)/(r_1,\dots, r_m$ where
$$
r_i=x_i^{q_i-1}\prod_{j\ne i}[x_i,x_j]^{\ell_{ij}} \ {\rm mod}\  F_3
$$
where  for $j\le m$,  $\ell_{ij}$ is the image in $\Z/pZ$ is any integer $r$ with
$$
q_i\equiv g_j^{-r} \ {\rm mod\  q_j}
$$
where $g_i$ is a primitive root mod $q_i$. 
\par 
If $S=\{q_1,\ldots, q_m, p\}$ the presentation is the same except for one additional variable $x_{m+1}$ but with $\ell_{ij}$ defined as before for $i,j\le m$ while for $i\le m, j=m+1$, $\ell{ij}$ is defined by
$$
q_i\equiv(1+p)^{-\ell_{ij}}\ {\rm mod} \ p.
$$
In both cases there is a linking diagram associated to the matrix $(\ell_{ij})$.
There is a striking similarity between the latter presentation and that of the complement of a pure braid link in $S^3$. In the Galois case we have $\cd(G)=2$, cf.~\cite{Bru66}, while we can't always prove this for the link group. 
\par
The first presentation is strikingly different since $\ell_{ij}\ne\ell_{ji}$ in general, the group has the same number of generators as relators and $G$ is fab.  If $\rho_i$ be the image of $r_i$ in $\gr_2(F)$ we have
$$
\rho_i=c_i\pi\xi_i+\sum_{j\ne i}\ell_{ij}[\xi,\xi_j]
$$
where $c_i=(q_i-1)/p$. Reducing mod $\pi$ we get the elements
$$
\overline\rho_i=\sum_{j\ne i}\ell_{ij}[\xi,\xi_j]
$$
in the free lie algebra over $\F_p$ on the $\xi_i$,  where $\xi_i$ is the image of $x_i$ in $\gr_1(F)$. The relators $(\rho_i)$ are strongly free iff the relators $(\overline\rho_i)$ are strongly free.

At this point in time there was not even one example of a pro-$p$-group $G$ with $G/[G,G]$ finite and $\cd(G)=2$. Even worse the Galois group in question has all of its derived factors finite because by class field theory, the maximal abelian $p$-extension of a number field which is unramified outside a finite set of primes not divisible by $p$ is of finite degree since the ramification is tame for such primes. Such a group is called is a {\bf fab group}. In this case the sentiment was that such a group must have torsion, so could not have finite cohomological dimension.

\section{The lightening bolt hits}
While on leave at Western University in London, Ontario in the Fall of 2004, discussing with Jan Min\'a\u c criteria for strong freeness of the relators in the Koch presentation for $G_S$ with $p\in S$, it suddenly dawned on me while reviewing the criteria for strong freeness in my paper~\cite{Lab85}  that, if $m$ was even and $\ge4$, I could prove that the relators
$$
\rho_1=[\xi_1,\xi_2],\rho_2=[\xi_2,\xi_3]\ldots \rho_{m-1}=[\xi_{m-1},\xi_m],\rho_m=[\xi_m,\xi_1]
$$
\noindent
are strongly free by the Elimination Theorem with $K=\F_p$ and $S=\{\xi_i| i\ {\rm odd}\}$. Note in this case, the linking diagram is a circuit. This meant that I was able to prove that the relators
$$
x_1^p[x_1,x_2],x_2^p[x_2,x_3],\ldots ,x_{m-1}^p[x_{m-1},x_m],x_m^p[x_m,x_1]
$$
are also strongly free giving the first example of a pro-$p$-group $G$ of cohomological dimension $2$ with $G/[G,G]$ finite.

Motivated by this I started to search for circuits in the linking diagram of the Galois group of the maximal $p$-extension unramified outside $S=\{q_1,\ldots, q_m\}$ with $m=4$, $p=3$. For $q_1=7$, $q_2=19$, $q_3=61$, $q_4=163$ I found the circuit 
$$
[\xi_1,\xi_2], [\xi_2,\xi_3], [\xi_3,\xi_4], [\xi_4,\xi_1]
$$
in the linking diagram of its presentation (mod $\pi$)
\begin{align*}
\overline\rho_1&=[\xi_1,\xi_2]+[\xi_1,\xi_4]\\
\overline\rho_2&=[\xi_2,\xi_1]+[\xi_2,\xi_3]+[\xi_2,\xi_4]\\
\overline\rho_3&=[\xi_3,\xi_4]\\
\overline\rho_4&=-[\xi_4,\xi_1]-[\xi_4,\xi_3].
\end{align*}
To prove that theses relators are strongly free it is enough, by our criterion for strong freeness, to prove that they are part of a basis of $\mathfrak s=(\xi_2,\xi_4)$. But because our Lie agebras are graded over $\F_p$, it is enough to prove the linear independence of their images $\overline\rho_i $ in $\mathfrak s/[\mathfrak s,\mathfrak s]$ are linearly independent. Since the $\overline\rho_i$ lie in the subspace spanned by the images of
$$
[\xi_1,\xi_2], [\xi_2,\xi_3], [\xi_3,\xi_4], [\xi_4,\xi_1].
$$
 we just have to prove that the rank of the matrix of the $\overline\rho_i$ with respect to this basis is $4$. But this matrix is
$$
\begin{bmatrix}1&0&0&-1\\
		      -1&1&0&0\\
		      0&0&1&0\\
		      0&0&-1&1\\
\end{bmatrix}
$$
whose determinant is $1$ and we have won! The proof in the general case of Theorem 1 is quite similar.
\begin{corollary}\label{C}
Let $A,B$ be a partition of $\{1,\ldots,d\}$. Let $\mathfrak T$ consist of the elements $[\xi_i,\xi_j]=\ad(\xi_i)(\xi_j)$ with $i\in A$, $j\in B$. Let $\rho_1,\ldots,\rho_m$ be the elements of $\mathfrak s$ with
$$\rho_k=\sum_{1\le i<j\le m}\overline a_{ijk}[\xi_i,\xi_j].$$
If they satisfy conditions 
\begin{enumerate}[\rm \ (a)]
 \item $\overline a_{ijk}=0$ when $i,j\in A$, 
  \item the rank of $M_L$ is $m$,
  \end{enumerate}
 the sequence $\rho_1,\ldots,\rho_m$ is strongly free in $L$.
\end{corollary}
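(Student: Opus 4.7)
The plan is to reduce the problem to the hypothesis of the Criterion for Strong Freeness, following the same pattern that the author used just above for the four-prime example: verify that the $\rho_k$ are $\F_p$-linearly independent modulo $[\mathfrak s,\mathfrak s]$, and then invoke the graded-Nakayama principle, valid because we are working over a field.

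First, condition (a) puts the $\rho_k$ into $\mathfrak s$. In the expansion $\rho_k=\sum_{i<j}\overline a_{ijk}[\xi_i,\xi_j]$, a bracket $[\xi_i,\xi_j]$ with both $i,j\in A$ lies in the free Lie subalgebra $L(S)$ rather than in $\mathfrak s$, and all such terms vanish by (a); the remaining brackets have at least one index in $B$ and so lie in $\mathfrak s$. Next I apply the Elimination Theorem with $K=\F_p$ to identify $\mathfrak s$ as the free Lie algebra over $\F_p$ on $T=\{\ad(m)(\xi_j)\mid m\in M(S),\ j\in B\}$. In the grading inherited from $L$, the degree-two part of $T$ is exactly $\mathfrak T$, so $\mathfrak T$ is an $\F_p$-basis of the degree-two component of $\mathfrak s/[\mathfrak s,\mathfrak s]$.

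Each $\rho_k$ is homogeneous of degree two, so I compute its image $\overline\rho_k$ in $\mathfrak s/[\mathfrak s,\mathfrak s]$ against the basis $\mathfrak T$. Terms $[\xi_i,\xi_j]$ with $i,j\in B$ are brackets of two elements of $\mathfrak s$ and vanish in the quotient; the surviving terms, with one index in $A$ and one in $B$, give (up to the sign from antisymmetry on brackets whose smaller index lies in $B$) the rows of $M_L$. Hypothesis (b) is the statement that $M_L$ has rank $m$, so the $\overline\rho_k$ are $\F_p$-linearly independent. Choose an $m\times m$ minor of $M_L$ that is nonzero and let $D\subset\mathfrak T$ be the corresponding $m$-element subset; replacing $D$ by $\{\rho_1,\ldots,\rho_m\}$ in $T$ yields a homogeneous set with the same Hilbert series as $T$ whose image still spans $\mathfrak s/[\mathfrak s,\mathfrak s]$ in every degree (degree two by the choice of $D$, all other degrees being untouched). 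By graded Nakayama this set generates $\mathfrak s$, and since the Hilbert series of the free Lie algebra it generates matches that of $\mathfrak s$, the natural surjection is an isomorphism; thus $\{\rho_1,\ldots,\rho_m\}$ is part of a free generating set of $\mathfrak s$. The Criterion for Strong Freeness then concludes that $\rho_1,\ldots,\rho_m$ is strongly free in $L$.

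The only non-mechanical step is the passage from $\F_p$-linear independence of the $\overline\rho_k$ in $\mathfrak s/[\mathfrak s,\mathfrak s]$ to the existence of a completion to a free generating set of $\mathfrak s$; this is where the field hypothesis and the graded structure are used essentially, and where over a more general coefficient ring one would have to work considerably harder, as in the original pro-$p$ setting of Section 2. Everything else is bookkeeping: checking that (a) really does land the $\rho_k$ inside $\mathfrak s$, identifying $\mathfrak T$ as the relevant degree-two basis of $\mathfrak s/[\mathfrak s,\mathfrak s]$, and reading off the coefficient matrix as $M_L$.
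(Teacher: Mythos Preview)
Your proposal is correct and follows essentially the same route as the paper: use the Elimination Theorem to identify $\mathfrak s$ as free on $T$, observe via condition~(a) that the $\rho_k$ land in the span of $\mathfrak T$ modulo $[\mathfrak s,\mathfrak s]$, read off condition~(b) as $\F_p$-linear independence there, and then invoke the Criterion for Strong Freeness. The paper's own justification is the two-line remark immediately following the corollary together with the sentence in the worked example that ``because our Lie algebras are graded over $\F_p$, it is enough to prove the linear independence of their images in $\mathfrak s/[\mathfrak s,\mathfrak s]$''; your Hilbert-series/graded-Nakayama paragraph simply makes that passage explicit.
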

Condition (a) implies that the $\rho_k$ lie in $\mathfrak T$ modulo $[\mathfrak s,\mathfrak s]$ and condition (b) implies the linear independence of the sequence $\rho_1,\ldots,\rho_m$ modulo $[\mathfrak s,\mathfrak s]$. 

Going back to Theorem 1, the Koch presentation of $G_S$ has $d=m$ and the relators are
$$r_i=x_i^{q_i-1}\prod_{j\ne i}[x_i,x_j]^{\ell_{ij}}w_i$$
with $w_i$ in the third term of the descending $p$-central series of $F$. This implies that the holomony relators are
$$\rho_i=\sum_{j\ne i}\ell_{ij}[\xi_i,\xi_j].$$
If the conditions of Theorem 1 are satisfied, that $\rho_1,\ldots,\rho_m$ is a strongly free sequence follows from Corollary 7 if we take $S$ to be the $\xi_i$ with $i$ even. Indeed, if we index the columns of the the matrix $M_L$ by $(1,2), (2,3),\ldots,(m-1,m),(1,m)$ we have
$$
M_L=\begin{bmatrix} \ell_{12}&0&0&\cdots&0&\ell_{1m}\\
                -\ell_{21}&\ell_{23}&0&\cdots&0&0\\
                0&-\ell_{32}&\ell_{34}&\cdots&0&0\\
                0&0&-\ell_{43}&\cdots&0&0\\
                \vdots&\vdots&\vdots&&\vdots&\vdots\\
                0&0&0&\cdots&\ell_{m,m-1}&0\\
                0&0&0&\cdots&-\ell_{m,m-1}&-\ell_{m1}\end{bmatrix},$$
 whose determinant is $\ell_{12}\ell_{23}\cdots\ell_{m-1,m}\ell_{m1}-\ell_{21}\ell_{32}\cdots\ell_{m,m-1}\ell_{1m}$.
 
 So the conditions of Theorems 1 are conditions for the strong freeness of the holonomy relators of a given presentation of a given pro-$p$-group.

\section{Uravelling the Statement of Theorem~\ref{B}}
 Let $G$ be a finitely generated pro-p-group. The cohomology group $H^i(G,\Z/p\Z)$ will be denoted by $H^i(G)$; it is a vector space over the finite field  $\F_p$. The pro-p-group $G$ is said to be of cohomological dimension $n$ if $H^n(G)\ne 0$ and $H^i(G)=0$ for $i>n$. The cohomological dimension of $G$ is said to be infinite if no such $n$ exists. The cohomological dimension of $G$ is denoted by $\cd (G)$. The cohomological dimension of a free pro-$p$-group is 1. In ~\cite{Lab67} we show that If $\dim H^2(G)=1$ and the cup-product
 $$ H^1(G)\otimes H^1(G)\rightarrow H^2(G)$$
 is a non-degenerate bilinear form then $\cd(G)=2$. This is the case when $G$ is the Galois group of the maximal $p$-extension of  $\Q_p(\zeta_p)$, where $\Q_p$ is the $p$-adic number field and $\zeta_p$ is a primitive $p$-th root of unity.

 We now unravel the statement of Theorem~\ref{B} in the context of a presentation of a pro-$p$-group $G$. If $F=F(x_1,\ldots,\_d)$ is the free pro-$p$-group on $x_1,....,x_d$ and
 $$1\rightarrow R\rightarrow F\rightarrow G\rightarrow 1$$
 is a presentation of $G$ we have an exact sequence
$$0\rightarrow H^1(G)\rightarrow H^1(F)\rightarrow H^1(R)^F\rightarrow H^2(G)\rightarrow H^2(F),$$
where $H^1(G)$ is the dual of $G/G^p[G,G]$. Here $[G,G]$ is the closed subgroup of $G$ generated by the commutators $[x,y]=x^{-1}y^{-1}xy$ with $x,y$ in $G$.
This implies that $\dim H^1(G)$ is the minimal number of generators of $G$. If $\dim H^1(F)=\dim H^1(G)$ the presentation is called minimal, in which case $H^2(G)=H^1(R)^F=H^1(R/R^p[R,F])$. We let $\xi_i$ be the image of $x_i$ in $F/F^p[F,F]=G/G^p[G,G]$ and $\chi_1,\ldots,\chi_d \in H^1(G)=H^1(F)$ the basis of $H^1(F)=H^1(G)$ dual to $\xi_1,\ldots,\xi_d$. Thus, for a minimal presentation, $\dim H^2(G)$ is the minimal number of elements of $R$ that generate $R$ as a closed normal subgroup of $R$.  The presentation is minimal if and only if $R\le F^p[F,F]$. 

Let $r_1,\ldots,r_m$ be elements of $F^p[F,F]$, let $R=(r_1\ldots,r_m)$ be the closed normal subgroup of $F$ generated by $r_1,\ldots,r_m$ and let $G=F/R$.  We have
$$r_k=\prod_{j=1}^d x_j^{pc_{kj}}\prod_{1\le i<j\le d} [x_i,x_j]^{a_{ijk}}s_k$$
with $s_k\in [F,[F,F]]$ and $c_{kj}, a_{ijk}\in\Z_p$. Let $\overline r_k$ be the image of $r_k$ in $R/R^p[R,F]=H^2(G)^*$, the dual of $H^2(G)$. Then $\overline r_1,\ldots,\overline r_m$ generate $H^2(G)^*$ so that $m\ge \dim H^2(G)$ which is $\ge1$ if $G$ is not a free pro-$p$-group.  An important basic fact is that $\overline r_k(\chi_i\cup\chi_j)=\overline a_{ijk}$, the image of $a_{ijk}$ in $\F_p=\Z_p/p\Z_p$.
Dualizing the cup product mapping 
$$\phi:H^1(G)\otimes H^1(G)\rightarrow H^2(G),$$
we get, when $p\ne2$, a mapping
 $$\phi^*:H^2(G)^*\rightarrow\bigwedge^2W=\bigoplus _{1\le i<j\le d}\F_p\xi_i\wedge\xi_j$$
where $W=F/F^p[F,F]$. We have $\langle\chi_i\cup\chi_j,\overline r_k\rangle=\rho_k(\chi_i\cup\chi_j)=\overline a_{ijk}$, But $\chi_i\cup\chi_j=\phi(\chi_i\otimes\chi_j)$ so that $\overline a_{ijk}=\langle\chi_i\otimes\chi_j,\phi^*(\overline r_k)\rangle$ which implies that
$$\phi^*(\overline r_k)=\sum_{1\le 1<j\le d}\overline a_{ijk}\xi_i\wedge\xi_j.$$
If $H^1(G)$ is the direct sum of non-trivial subspaces $U,V$ we have, after a possible change of basis,
 $$U=\bigoplus_{i\in A}\F_p\chi_i\  \  V=\bigoplus_{i \in B}\F_p\chi_i,$$
 where $A,B$ is a partition of  $\{1,\ldots,d\}$.
 If $\psi$ is the restriction of $\phi$ to $U\otimes V$, then $ \psi$ maps $U\otimes V$ surjectiively onto $H^2(G)$  if and only if
 $$\psi^*:H^2(G)^*\rightarrow Z=\bigoplus_{ i\in A ,\  j \in B} \F_p\xi_i\wedge\xi_j.$$
 is injective; note that 
 $$\psi^*(\overline r_k)=\sum_{ i\in A,\  j\in B}a_{ijk}\xi_i\wedge\xi_j.$$
 If $\sigma$ is the projection of $\bigwedge^2W$ onto $Z$ we have $\psi^*=\sigma\phi^*$  so that $\psi^*$ is injective if and only if $\phi^*$ is injective, the latter being  equivalent to $m$ being equal to the rank of the matrix $M_L$ whose columns are indexed by the pairs $(i,j)$ with $i\in A,j\in B$ and whose entry in the $k$-th row and $(i,j)$-th column is $\overline a_{ijk}$. Thus, to prove Theorem~\ref{B}, we have to prove that $\cd(G)=2$ if the following two conditions 
 \begin{enumerate}[\ \rm (a)]
 \item $\overline a_{ijk}=0$ when $i,j\in A$, 
  \item the rank of $M_L$ is $m$,
  \end{enumerate}
 hold for the elements 
 $$\rho_k=\sum_{1\le 1<j\le d}\overline a_{ijk}\xi_i\wedge\xi_j.$$ 
 Note that that ${\rm rank }\  M_L=m$ implies $\dim H^2(G)=m$.
 If $ L=L(X)$ is the free $\F_p$-Lie algebra on $X=\{\xi_1,\ldots,\xi_d\}$ we can identify $\bigwedge^2 W$ the space of degree $2$ elements of $L$ with $\xi_i\wedge\xi_j$ correspnding to $[\xi_i,\xi_j].$ So that 
 $$
 \rho_k=\sum_{1\le i<j\le d}\overline a_{ijk}[\xi_i,\xi_j].
 $$
 But, by our criterion for strong freeness, the sequence $\rho_1,\ldots,\rho_m$ is strongly free and hence $\cd(G)=2$.

\section{Epilog}
In \cite{Sch} Alexander Schmidt extended our results to the case of global fields. In ~\cite{LM2011} our results for $p$ odd were extended to $p=2$. In \cite{Forre2011} Forr\'e gave an independent treatment of our results which covered the case $p=2$. In ~\cite{Gar} Gartner, using higher Masssey products, extended our cohomological criterion for mildness to obtain mild pro-$p$-groups defined by relators of arbitrary degree.

\end{document}